\tikzset{
  on each segment/.style={
    decorate,
    decoration={
      show path construction,
      moveto code={},
      lineto code={
        \path [#1]
        (\tikzinputsegmentfirst) -- (\tikzinputsegmentlast);
      },
      curveto code={
        \path [#1] (\tikzinputsegmentfirst)
        .. controls
        (\tikzinputsegmentsupporta) and (\tikzinputsegmentsupportb)
        ..
        (\tikzinputsegmentlast);
      },
      closepath code={
        \path [#1]
        (\tikzinputsegmentfirst) -- (\tikzinputsegmentlast);
      },
    },
  },
  mid arrow/.style={postaction={decorate,decoration={
        markings,
        mark=at position .5 with {\arrow[#1]{stealth}}
      }}},
}
\newtheorem{thm}{Theorem}[section]
\newtheorem{df}[thm]{Definition}
\newtheorem{eg}[thm]{Example}
\newtheorem{prp}[thm]{Proposition}
\newtheorem*{mainthm*}{Theorem \ref{main thm}}
\newcommand{\1}{\mathbb{1}}
\newcommand{\E}{\mathbb{E}}
\renewcommand{\L}{\mathcal{L}}
\newcommand{\wt}{\mathrm{wt}}
\title{Arborescences of Random Covering Graphs}
\author{Muchen Ju}
\address{(M. Ju) Department of Mathematics, Fudan University, 220 Handan Rd, Shanghai, China}
\email{22300180095@m.fudan.edu.cn}
\author{ Junjie Ni}
\address{(J. Ni) Department of Mathematics, Nankai University, 94 Weijin Rd, Tianjin, China}
\email{ni.509@buckeyemail.osu.edu}
\author{Kaixin Wang}
\address{(K. Wang) Department of Mathematics, Duke University, 120 Science Drive, Durham, NC}
\email{kai.wang@duke.edu}
\author{Yihan Xiao}
\address{(Y. Xiao) Department of Mathematics, Shanghai University, 99 Shangda Rd, Shanghai, China}
\email{
jstbfrnd@shu.edu.cn}
\date{\today}
\begin{document}

\maketitle
\begin{abstract}
    A rooted arborescence of a directed graph is a spanning tree directed towards a particular vertex. A recent work of Chepuri et al. \cite{Sylvester} showed that the arborescences of a covering graph of a directed graph $G$ are closely related to the arborescences of $G$. In this paper, we study the weighted sum of arborescences of a random covering graph and give a formula for the expected value, resolving a conjecture of Chepuri et al. \cite{Sylvester}.
\end{abstract}

\section{Introduction}

For a weighted directed graph $\Gamma$, an \emph{arborescence rooted at a vertex} $v$ is a spanning tree whose edges are directed towards $v$. The weight of an arborescence is the product of the edge weights. We define $A_v(\Gamma)$ to be the weighted sum of all arborescences rooted at $v$.

Covering graphs are central objects in topological graph theory motivated by the notion of covering spaces in topology. A weighted digraph $\tilde \Gamma$ is a covering graph of $\Gamma$ defined in the topological sense (see section \ref{section3} for details). The fundamental work of Gross-Tucker \cite{Tucker} gave a characterization of covering graphs by permutation voltage assignments.

In their study of $R$-systems, Galashin and Pylyavskyy \cite{Galashin} observed that the ratio of arborescences of a covering graph and its base graph $A_{\tilde v}(\tilde \Gamma)\over A_v(\Gamma)$ is invariant of the choice of the root $v$. More recently, Chepuri et al. \cite{Sylvester} further investigated the ratio and proved that it is always an integer-coefficient polynomial in the edge weights of the base graph $\Gamma$ conjecturally with positive coefficients. Following up these works, we study the expected value of the ratio $A_{\tilde v}(\tilde \Gamma)\over A_v(\Gamma)$ when $\tilde \Gamma$ is taken to be a random covering graph. Our main theorem is a formula for its expected value, affirming Conjecture 5.6 of \cite{Sylvester}. 

\begin{thm}\label{main thm}
        Let $\Gamma = (V,E,\wt)$ be a weighted digraph, and $k$ be a positive integer. We choose a permutation voltage $\sigma_e$ in the symmetric group $ S_k$ for each edge $e\in E$ independently and uniformly at random. The random voltages allow us to construct our random $k$-fold covering graph $\tilde{\Gamma}$. Then 

        $$ \E\left[\frac{A_{\tilde v}(\tilde \Gamma)}{A_v(\Gamma)}\right] = \frac 1k \prod_{w\in V} \left( \sum_{\alpha \in E_s(w)} \wt(\alpha)\right)^{k-1}$$

        where $E_s(w)$ is the set of edges in $\Gamma$ with source $w$. 
\end{thm}

The paper is organized as follows. In section \ref{section2} we introduce the relevant background in graph theory, including the Matrix-Tree Theorem. In section \ref{section3}, we state the definition of covering graphs and review the theory of arborescence ratio in \cite{Sylvester}. In section \ref{section4} we prove our main result, Theorem \ref{main thm}. 

\section{Background On Graph Theory}\label{section2}

In this paper, we consider directed multigraphs, or \emph{digraphs} for short. Unless otherwise stated, we consider digraphs with weighted edges.
\begin{df}
    Let $G = (V,E,\wt)$ be a weighted digraph with $V = \{v_1,\cdots,v_{|V|}\}$. For a vertex $v\in V$, define $E_s(v)$ to be the set of edges with source $v$ and $E_t(v)$ to be the set of edges with terminal $v$. The \emph{degree matrix} $D$ of $G$ is the $|V|\times |V|$ diagonal matrix with $$D[v_i, v_i] = \sum_{e\in E_s(v_i)} \wt(e)$$
    
    for all $v_i\in V$. The \emph{adjacency matrix} $A$ of $G$ is defined as $$A[v_i,v_j] = \sum_{e=(v_i\to v_j)} \wt(e) $$ for all $v_i,v_j \in V$. The \emph{Laplacian matrix} $L$ of $G$ is defined as $D-A$.
\end{df}

\begin{eg}\label{eg:basic}
    Let $G$ be a weighted digraph with vertex set $\{1,2,3\}$, with only one edge from $i$ to $ j$ (not necessarily distinct) in $V(G)$ with weight $x_{ij}$ for all $i,j \in \{1,2,3\}$. Then $$ D = \begin{bmatrix}
        x_{11} + x_{12} + x_{13} & 0 & 0 \\
        0 & x_{21} + x_{22} + x_{23} & 0 \\
        0 & 0 & x_{31} + x_{32} + x_{33}
    \end{bmatrix}$$

    $$ A = \begin{bmatrix}
        x_{11} & x_{12} & x_{13} \\
        x_{21} & x_{22} & x_{23} \\
        x_{31} & x_{32} & x_{33}
    \end{bmatrix}.$$

    Hence $$ L = D - A = \begin{bmatrix}
         x_{12} + x_{13} & -x_{12} & -x_{13} \\
        -x_{21} & x_{21} + x_{23} & -x_{23} \\
        -x_{31} & -x_{32} & x_{31} + x_{32} 
    \end{bmatrix}.$$
\end{eg}

Note that the Laplacian is always a singular matrix, since the sum of entries in each row is 0.

\begin{df}
    An \emph{arborescence} $T$ of a weighted digraph $\Gamma$ rooted at $v\in V$ is a spanning tree directed towards $v$. The \emph{weight} of an arborescence is the product of the weights of its edges:

    $$ \wt(T) = \prod_{e\in T} \wt(e).$$

    We define $A_v(\Gamma)$ to be the sum of weights of all arborescences.
\end{df}
The Matrix-Tree Theorem \cite{Fomin} relates the minors of the Laplacian and the arborescences. 
\begin{thm}  [Matrix-Tree Theorem \cite{Fomin}] 
    Let $G$ be a weighted digraph with vertex set $\{1,\cdots,n\}$.  Then 

    $$ A_i(G) = \det(L(G)_i^i) $$

    for all $i\in \{1,\cdots,n\}$. Here $L(G)_i^i$ is the Laplacian matrix $L(G)$ with row $i$ and column $i$ removed.
\end{thm}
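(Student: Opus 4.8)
The plan is to prove the Matrix-Tree Theorem by expanding $\det(L_i^i)$ with the permutation (Leibniz) formula and identifying the surviving monomials with arborescences rooted at $i$.

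First I would record the combinatorial shape of the right-hand side. In an arborescence rooted at $i$, every vertex $j \neq i$ has out-degree exactly one and $i$ has out-degree zero; conversely, a choice of one out-edge from each $j \neq i$ forms an arborescence precisely when the associated functional graph has no directed cycle (equivalently, iterating the choice from any vertex eventually reaches $i$). Hence $A_i(G) = \sum_h \prod_{j \neq i} x_{j,h(j)}$, summed over all functions $h\colon \{1,\dots,n\}\setminus\{i\}\to\{1,\dots,n\}$ with acyclic functional graph.

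Next I would expand the determinant. After deleting row and column $i$, the matrix $L_i^i$ is indexed by $\{1,\dots,n\}\setminus\{i\}$ with $(L_i^i)[j,j]=\sum_{k\neq j}x_{jk}$ and $(L_i^i)[j,k]=-x_{jk}$ for $j\neq k$. Writing $\det(L_i^i)=\sum_\sigma \operatorname{sgn}(\sigma)\prod_j (L_i^i)[j,\sigma(j)]$ over permutations $\sigma$ of $\{1,\dots,n\}\setminus\{i\}$ and then distributing each diagonal sum over the fixed points of $\sigma$, every resulting term is a signed monomial $\pm\prod_{j\neq i}x_{j,h(j)}$, where $h(j)=\sigma(j)$ on the non-fixed points of $\sigma$ and $h(j)$ records the chosen index on the fixed points; note $h(j)\neq j$ always. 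A short sign count shows such a term carries the sign $(-1)^m$, where $m$ is the number of cycles of $\sigma$ of length $\geq 2$: each such cycle of length $\ell$ contributes $(-1)^{\ell-1}$ to $\operatorname{sgn}(\sigma)$ and $(-1)^\ell$ from the off-diagonal minus signs, a net factor of $-1$.

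Finally I would collect terms according to $h$. The Leibniz terms producing a fixed monomial $\prod_{j\neq i}x_{j,h(j)}$ are in bijection with the choices of a subset $\mathcal{S}$ of the directed cycles of the functional graph of $h$ --- the cycles of $h$ chosen to become the nontrivial cycles of $\sigma$, with all other vertices fixed --- and the term's sign is $(-1)^{|\mathcal{S}|}$. Therefore the coefficient of $\prod_{j\neq i}x_{j,h(j)}$ in $\det(L_i^i)$ equals $\sum_{\mathcal{S}}(-1)^{|\mathcal{S}|}=(1-1)^{c(h)}$, where $c(h)$ is the number of cycles of $h$; this is $1$ when $h$ is acyclic and $0$ otherwise. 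Matching with the formula for $A_i(G)$ from the previous paragraph completes the proof. The step needing the most care is the sign-and-bijection bookkeeping in the two middle paragraphs --- in particular checking that loops, which cancel out in the Laplacian, never contribute, and that distributing over fixed points is genuinely a bijection onto pairs $(h,\mathcal{S})$; the final cancellation is then a one-line inclusion--exclusion.
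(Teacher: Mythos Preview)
Your proposal is correct and follows the same classical approach the paper sketches in Section~4.1: expand $\det(L_i^i)$ via the Leibniz formula, interpret each monomial $\prod_{j\neq i}x_{j,h(j)}$ as a functional digraph on $\{1,\dots,n\}$, and show its coefficient is $1$ when that digraph is an in-tree at $i$ and $0$ otherwise. The paper only outlines this idea and leaves the sign count and inclusion--exclusion cancellation implicit; your write-up supplies precisely those details, and the bookkeeping you flag (loops cancelling in $L$, the bijection between Leibniz terms and pairs $(h,\mathcal{S})$) is handled correctly.
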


\begin{eg}
    In example \ref{eg:basic} with $n=3$, 

    $$ L_1^1 = \begin{bmatrix}
        x_{21} + x_{23} & -x_{23}\\
        -x_{32} & x_{31} + x_{32}
    \end{bmatrix}.$$

    Then $$\det(L_1^1) = (x_{21}+x_{23})(x_{31}+x_{32}) - (-x_{23})(-x_{32}) = x_{21}x_{31} + x_{23}x_{31} + x_{21}x_{32} = A_1(G).$$

    The last equality holds since there are 3 arborescences rooted at $1$: namely $2,3\to 1, 2\to 3\to 1$ and $3\to 2\to 1$. 
\end{eg}

\section{Covering Graphs}\label{section3}

A \emph{$k$-fold cover} of a weighted digraph $\Gamma = (V, E,\wt)$ is a weighted digraph $\tilde{\Gamma} = (\tilde{V}, \tilde{E},\wt)$ that is a $k$-fold covering space of $G$ in the topological sense that preserves edge weight. That is, we require that a lifted edge in the covering graph to have the same weight as its corresponding base edge in the base graph. In order to use this definition, we need to find a way to formally topologize directed graphs in a way that encodes edge orientation. To avoid this, we instead give a more concrete alternative definition of a covering graph.

\begin{df}  Let $k$ be a positive integer and $\Gamma = (V,E,\wt)$ be a weighted digraph. A weighted digraph $\tilde{\Gamma} = (\tilde{V}, \tilde{E}, \wt)$ is a \emph{$k$-fold covering graph} of $\Gamma$ if there exists a projection map $\pi: \tilde{\Gamma} \rightarrow \Gamma$ such that
\begin{enumerate}
    \item $\pi$ maps vertices to vertices and edges to edges;
    \item $|\pi^{-1}(v)| = |\pi^{-1}(e)| = k$ for all $v \in V, e \in E$;
    \item If $\tilde e\in \tilde E$ is an edge from $\tilde v$ to $\tilde w$, then $\pi(\tilde e) $ is an edge from $\pi(\tilde v)$ to $\pi(\tilde w)$;
    \item For all $\tilde{e} \in \tilde{E}$, we have $\wt(\tilde{e}) = \wt(\pi(\tilde{e}))$;
    \item $\pi$ is a local homeomorphism.  Equivalently, $|E_s(\tilde{v})| = |E_s(\pi(\tilde{v}))|$ and $|E_t(\tilde{v})| = |E_t(\pi(\tilde{v}))|$ for all $\tilde{v} \in \tilde{V}$.
\end{enumerate}
When we refer to $\tilde{\Gamma}$ as a covering graph of  $\Gamma$, we fix a distinguished projection $\pi: \tilde{\Gamma} \rightarrow \Gamma$. 
\end{df}

 We do not require a covering graph to be connected. However, disconnected graphs contain no arborescences, so our main quantity of interest $A_{\tilde v}(\tilde{\Gamma})$ is always $0$ in the disconnected case. 

\begin{df} Fix an integer $k$. A \emph{weighted permutation-voltage graph} $\Gamma = (V, E, \wt, \nu)$ is a weighted directed multigraph with each edge $e$ also labeled by a permutation $\nu(e) = \sigma_e\in S_k$, the symmetric group on $k$ letters. This labeling is called the \emph{voltage} of the edge $e$. Note that the voltage of an edge $e$ is independent of the weight of $e$.
\end{df}

\begin{df}\label{df:coveringGraph} Let $\Gamma = (V,E,\wt)$ be a weighted digraph with a permutation in $\sigma_e \in S_k$ for every $e\in E$, the \emph{$k$-fold covering graph $\tilde{\Gamma} = (\tilde{V}, \tilde{E}, \wt)$ associated to} $(\sigma_e)_{e\in E}$ is a digraph with vertex set $\tilde{V} = V \times \{1,2,\ldots,k\}$ and edge set
\begin{align*}
    \tilde{E}:=\left\{\left(v \times x, w \times \sigma_e(x)\right) : x \in \{1,\ldots,k\}, e=(v,w)\in E\right\}.
\end{align*}
For an edge $e=(v,w)$ of weight $\wt(e)$, all corresponding $\left(v \times x, w \times \sigma_e(x)\right)$ also have weight $\wt(e)$. 
\end{df}

\begin{thm}[\cite{Tucker}, Gross-Tucker's characterization]
    Every covering graph of $\Gamma$ can be constructed from a permutation voltage graph $\Gamma$. 
\end{thm}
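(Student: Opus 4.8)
The plan is to reverse the construction of Definition 3.4. Given a covering graph $\pi\colon \tilde\Gamma \to \Gamma$, I will manufacture a permutation-voltage assignment $\nu$ on $\Gamma$ and then exhibit an isomorphism, over $\Gamma$, between the cover built from $(\Gamma,\wt,\nu)$ and $\tilde\Gamma$. First I would choose, for every vertex $v\in V$, a bijection $\phi_v\colon \pi^{-1}(v)\to\{1,\dots,k\}$; this is possible by condition (2). This data identifies $\tilde V$ with $V\times\{1,\dots,k\}$ via $\tilde v\mapsto(\pi(\tilde v),\,\phi_{\pi(\tilde v)}(\tilde v))$, and it is exactly a choice of "sheet labels'' along each fiber.

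Next, for each edge $e=(v,w)\in E$ I would look at the fiber $\pi^{-1}(e)$, which has exactly $k$ edges by condition (2), each necessarily of the form $\tilde v\to\tilde w$ with $\tilde v\in\pi^{-1}(v)$ and $\tilde w\in\pi^{-1}(w)$ since $\pi$ respects incidences. The crucial observation is that the source map $\mathrm{src}\colon\pi^{-1}(e)\to\pi^{-1}(v)$ and the target map $\mathrm{tgt}\colon\pi^{-1}(e)\to\pi^{-1}(w)$ are both bijections: the local homeomorphism condition (4) says precisely that $\pi$ restricts to a bijection $E_s(\tilde v)\to E_s(v)$ and $E_t(\tilde v)\to E_t(v)$ for every $\tilde v$, so each vertex of $\pi^{-1}(v)$ is the source of exactly one lift of $e$ and each vertex of $\pi^{-1}(w)$ is the target of exactly one lift of $e$; together with $|\pi^{-1}(e)|=|\pi^{-1}(v)|=|\pi^{-1}(w)|=k$ this forces bijectivity. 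I then set $\sigma_e:=\phi_w\circ\mathrm{tgt}\circ\mathrm{src}^{-1}\circ\phi_v^{-1}\in S_k$ and define $\nu(e)=\sigma_e$. I expect this to be the only real obstacle: it is exactly here that the local homeomorphism hypothesis is used, rather than merely the cardinality conditions (1)--(3), since otherwise parallel edges of the multigraph $\Gamma$ could let two lifts of $e$ emanate from, or land on, the same vertex of $\tilde\Gamma$ and the "permutation'' would fail to exist.

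Finally I would verify that the cover $\tilde\Gamma'$ built from $(\Gamma,\wt,\nu)$ as in Definition 3.4 is isomorphic to $\tilde\Gamma$ as a covering graph. On vertices the identification above is a bijection $\tilde V\to V\times\{1,\dots,k\}=\tilde V'$ that intertwines $\pi$ with the projection $(v,x)\mapsto v$. On edges, the lift $\tilde e\in\pi^{-1}(e)$ with source $\tilde v$ and target $\tilde w$ satisfies $\phi_w(\tilde w)=\sigma_e(\phi_v(\tilde v))$ by the very definition of $\sigma_e$ (as $\tilde e=\mathrm{src}^{-1}(\tilde v)$), so it corresponds to the edge $[\,v\times\phi_v(\tilde v),\, w\times\sigma_e(\phi_v(\tilde v))\,]\in\tilde E'$; this assignment is a bijection $\tilde E\to\tilde E'$ because both sides have $k|E|$ edges and distinct lifts of a common $e$ have distinct sources. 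Edge weights agree by condition (3) and the weight rule of Definition 3.4, and the map commutes with the two projections, so it is an isomorphism of covering graphs. Note that connectedness of $\Gamma$ (or $\tilde\Gamma$) is never used, so the argument covers the disconnected case verbatim.
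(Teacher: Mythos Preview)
The paper does not actually prove this theorem; it merely states it with a citation to Gross--Tucker and moves on, so there is nothing in the paper to compare your argument against. That said, your proposal is correct and is exactly the classical Gross--Tucker construction: label the fibers, use the local-homeomorphism condition to see that the source and target maps on $\pi^{-1}(e)$ are bijections, read off $\sigma_e$ as the induced permutation, and check that the derived cover is isomorphic to $\tilde\Gamma$ over $\Gamma$. Your identification of condition~(4) as the crucial hypothesis (without which two lifts of $e$ could share a source or target) is the right diagnosis, and the verification of the isomorphism is complete as written.
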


By this characterization result, we can use only definition \ref{df:coveringGraph} to analyze $k$-fold covering graphs from now on.

\begin{eg}\label{ex:perm}
Let $\Gamma$ be the permutation-voltage graph shown in Figure~\ref{fig:perm-voltage-graph}, where edges labeled $(w, \sigma)$ have edge weight $w$ and voltage $\sigma \in S_3$. Then we can construct a $3$-fold cover $\tilde{\Gamma}$, with vertices $(v, y) = v^y$ and with edges labeled by weight, is shown in Figure~\ref{fig:perm-derived-graph}.
\begin{figure}[htp]\label{fig:derived-graph}
    \centering
    \begin{tikzpicture}
   [scale=0.65,line width=1.2]
    \coordinate (1) at (0, 3);
    \coordinate (2) at (3/1.71, 0);
    \coordinate (3) at (-3/1.71, 0);
    
    \coordinate (4) at (0, 3 + 0.4);
    \draw (1) arc(270:360+270:0.4);

    \path [draw = red, postaction = {on each segment = {mid arrow = red}}]
    (1)--(2);
     \path [draw = blue, postaction = {on each segment = {mid arrow = red}}]
     (3)--(1);
     \path [draw = green, postaction = {on each segment = {mid arrow = red}}]
    (2) to [bend left] (3);
    
     \path [draw = purple, postaction = {on each segment = {mid arrow = red}}]
    (3) to [bend left] (2);
    
    \draw[fill] (1) circle [radius=0.1];
    \node at (0.5, 3) {$1$};
    \draw[fill] (2) circle [radius=0.1];
    \node at (3/1.71 + 0.5, 0) {2};
    \draw[fill] (3) circle [radius=0.1];
    \node at (-3/1.71-0.5, 0) {3};

    \node at (1 + 0.4, 3+ 0.5) {$(a, 321)$};
    \node at (3/1.71/2 + 1.1, 3/2) {$(b, 231)$};
    \node at (-3/1.71/2 - 1.2, 3/2) {$(d, 123)$};
    \node at (0, 0.85 + 0.1) {$(e, 132)$};
    \node at (0, -0.85 - 0.1) {$(c, 123)$};
    \end{tikzpicture}
    \caption{A permutation-voltage graph $\Gamma$.}
    \label{fig:perm-voltage-graph}
\end{figure}
\begin{figure}[htp]
    \centering
  \begin{tikzpicture}[scale = 0.7, line width=1.2]
    \coordinate (1) at (0, 3+4);
    
    \coordinate (2) at (3/1.71, 4);
    \coordinate (3) at (-3/1.71, 4);
    
    \coordinate (4) at (-4, 3);
    
    \coordinate (5) at (3/1.71 - 4, 0);
    \coordinate (6) at (-3/1.71 - 4, 0);
    
    \coordinate (7) at (4, 3);
    
    \coordinate (8) at (3/1.71 + 4, 0);
    \coordinate (9) at (-3/1.71 + 4, 0);
    
\draw[fill] (1) circle [radius=0.1] ;
    \draw[fill] (2) circle [radius=0.1];
    \draw[fill] (3) circle [radius=0.1];
    \draw[fill] (4) circle [radius=0.1];
    \draw[fill] (5) circle [radius=0.1];
    \draw[fill] (6) circle [radius=0.1];
    \draw[fill] (7) circle [radius=0.1];
    \draw[fill] (8) circle [radius=0.1];
    \draw[fill] (9) circle [radius=0.1];
    
    \node at (0, 7.5) {$1^1$};
    
    \node at (3/1.71, 3.6) {$2^1$};
    \node at (-3/1.71 -0.4, 4) {$3^1$};
    
    \node at (-4 + 0.5, 3+0.3) {$1^2$};
    
    \node at (3/1.71 - 4, -0.5) {$2^2$};
    \node at (-3/1.71 - 4 -0.5, 0) {$3^2$};
    
    \node at (4 + 0.5, 3) {$1^3$};
    
    \node at (3/1.71 + 4 + 0.5, 0) {$2^3$};
    \node at (-3/1.71 + 4, -0.5) {$3^3$};
    
    \path[draw = black, postaction = {on each segment = {mid arrow = red}}]
    
    (1) to (7)
    (7) to [bend right] (1);
    
    \draw (4) arc(-45:360-45:0.4);
    
    \path[draw = green, postaction = {on each segment = {mid arrow = red}}]
    (5) -- (6)
    (8) to (9)
    (2) to [bend left] (3);
    
    \path[draw = red, postaction = {on each segment = {mid arrow = red}}]
    (1)  -- (5)
    (4) -- (8)
    (7) -- (2);
    
    \path[draw = blue, postaction = {on each segment = {mid arrow = red}}]
    (3) -- (1)
    (6) -- (4)
    (9) -- (7);
    
    \path[draw = purple, postaction = {on each segment = {mid arrow = red}}]
    (3) to [bend left] (2)
    (6) to [bend right] (8)
    (9) -- (5);
    
    \node at (1.5, 5.1) {$a$};
    \node at (2.8, 5.8) {$a$};
    \node at (-4.2, 4) {$a$};
    
    \node at (0.6, 1.9) {$b$};
    \node at (-1, 2.9) {$b$};
    \node at (2.7, 3.2) {$b$};
    
    \node at (0, 4-0.3) {$c$};
    \node at (-4, 0.3) {$c$};\node at (4, 0.3) {$c$};
    
    \node at (-3/1.71/2 - 0.5, 3/2+4) {$d$};
    \node at (-3/1.71/2 - 0.5-4, 3/2) {$d$};
    \node at (-3/1.71/2 + 0.4+4, 3/2) {$d$};
    
    \node at (0, 4+0.3) {$e$};
    \node at (0, -1.3) {$e$};
    \node at (0, -0.3) {$e$};
    
    \draw[fill] (1) circle [radius=0.1] ;
    \draw[fill] (2) circle [radius=0.1];
    \draw[fill] (3) circle [radius=0.1];
    \draw[fill] (4) circle [radius=0.1];
    \draw[fill] (5) circle [radius=0.1];
    \draw[fill] (6) circle [radius=0.1];
    \draw[fill] (7) circle [radius=0.1];
    \draw[fill] (8) circle [radius=0.1];
    \draw[fill] (9) circle [radius=0.1];
    \end{tikzpicture}
    \caption{The derived covering graph $\tilde{\Gamma}$ of $\Gamma$ in Figure~\ref{fig:perm-voltage-graph}. Edge colors denote correspondence to the edges of $\Gamma$ via the projection map.}
\label{fig:perm-derived-graph}
\end{figure}
\end{eg}

\newpage Given $v\in V(\Gamma)$, for any   $\tilde v \in V(\tilde \Gamma)$ that projects to $v$, $\frac{A_{\tilde v}(\tilde \Gamma)}{A_v(\Gamma)}$ is independent of $\tilde v$. We call this ratio $R(\tilde \Gamma)$. This ratio has an explicit formula in \cite{Sylvester} as follows.

\begin{df}\label{lowerright}
Let $\{v_1,\cdots,v_n\}$ be the set of vertices of our digraph $\Gamma$, let $\tilde \Gamma$ be a $k$-fold cover of $\Gamma$, where vertex $v_i$ is lifted to $v_i^1,\cdots,v_i^k$.
Define $n(k-1)\times n(k-1)$ matrices $D$ and $\mathcal{A}$ with basis vectors $v_1^2,\cdots,v_1^{k}, v_2^2,\cdots,v_2^{k}, \cdots, v_n^2, \cdots, v_n^{k}$,  as follows.
\[\mathcal{A}[v_i^t,v_j^r]=\sum_{e=(v_i^t\to v_j^r)}\wt(e)-\sum_{e=(v_i^t\to v_j^1)}\wt(e)\]
\[D[v_i^t,v_i^t]=\sum_{e\in E_s(v_i^t)}\wt(e)\]
for $1<t,r\leq k$ and $1\le i,j\le n$. Finally, we define
\[ \mathcal{L}(\tilde{\Gamma}):=D-\mathcal{A}\]
\end{df}

 to be the \emph{voltage Laplacian} of $\tilde{\Gamma}$.

\begin{thm}[\cite{Sylvester}]\label{bigboi}
	Let $\Gamma = (V, E, \wt)$ be an weighted digraph, and let $\tilde{\Gamma}$ be a $k$-fold cover of $\Gamma$ such that each lifted edge is assigned the same weight as its base edge. Denote by $\mathcal{L}(\Gamma)$ the voltage Laplacian of $\Gamma$. Then for any vertex $v$ of $\Gamma$ and any lift $\tilde{v}$ of $v$ in  $\tilde \Gamma$ , we have
	\begin{align*}
		\frac{A_{\tilde{v}}(\tilde{\Gamma})}{A_v(\Gamma)} = \frac{1}{k}\det  [\mathcal{L}(\tilde{\Gamma})].
	\end{align*}
\end{thm}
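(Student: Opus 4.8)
The strategy is to apply the Matrix-Tree Theorem on both $\tilde\Gamma$ and $\Gamma$, to block-diagonalize the Laplacian of the cover using the representation theory of $S_k$, and then to extract the linear coefficient of a one-parameter perturbation. Let $L$ and $\tilde L$ denote the Laplacians of $\Gamma$ and $\tilde\Gamma$, with the vertices of $\tilde\Gamma$ ordered as $V\times\{1,\dots,k\}$. Since $\pi$ is a local homeomorphism the out-degree weight of $v\times x$ equals that of $v$, so the degree matrix of $\tilde\Gamma$ is $D\otimes I_k$; and from the voltage description of $\tilde E$, the $(v,w)$-block of the adjacency matrix of $\tilde\Gamma$ is $\sum_{e=(v\to w)}\wt(e)\,P_{\sigma_e}$, where $P_\sigma$ is the $k\times k$ permutation matrix of $\sigma$. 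Every $P_\sigma$ fixes the all-ones vector $\mathbf 1\in\C^k$ and preserves its complement $W=\{x\in\C^k:\sum_i x_i=0\}$, acting on $W$ through the standard representation $\rho$ of $S_k$. Hence applying the one fixed change of basis $\C^k=\C\mathbf 1\oplus W$ inside the $\C^k$-factor of every block at once, one sees that $\tilde L$ is conjugate to the block-diagonal matrix $L\oplus\L(\Gamma)$, where $\L(\Gamma)$ is precisely the voltage Laplacian: the $(k-1)|V|\times(k-1)|V|$ matrix whose $(v,w)$-block is $\delta_{vw}D[v,v]I_{k-1}-\sum_{e=(v\to w)}\wt(e)\,\rho(\sigma_e)$.

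Next I would introduce $M(t):=\tilde L+t\,(e_v e_v^{\top}\otimes I_k)$, which adds $t$ to each of the $k$ diagonal entries indexed by the fiber $\pi^{-1}(v)$. Expanding $\det M(t)$ as a multilinear function of those $k$ rows gives $\det M(t)=\sum_{S}t^{|S|}\det(\tilde L\text{ with rows and columns }\{v\times x:x\in S\}\text{ deleted})$; so the constant term is $\det\tilde L=0$ (a Laplacian is singular), while the coefficient of $t$ is $\sum_{x=1}^{k}\det(\tilde L_{v\times x}^{v\times x})=\sum_{x=1}^{k}A_{v\times x}(\tilde\Gamma)$ by the Matrix-Tree Theorem. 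On the other hand $e_v e_v^{\top}\otimes I_k$ also respects $\C^k=\C\mathbf 1\oplus W$, so $M(t)$ is conjugate to $(L+t\,e_v e_v^{\top})\oplus(\L(\Gamma)+t\,(e_v e_v^{\top}\otimes I_{k-1}))$; since $\det(L+t\,e_v e_v^{\top})=\det L+t\det(L_v^v)=t\,A_v(\Gamma)$ (again singularity of $L$ plus Matrix-Tree), taking determinants gives $\det M(t)=t\,A_v(\Gamma)\det\L(\Gamma)+O(t^2)$. Comparing the coefficient of $t$ in the two computations of $\det M(t)$ yields the identity $\sum_{x=1}^{k}A_{v\times x}(\tilde\Gamma)=A_v(\Gamma)\,\det\L(\Gamma)$.

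Finally I would invoke the observation of Galashin--Pylyavskyy recalled just before the statement, that $A_{\tilde v}(\tilde\Gamma)/A_v(\Gamma)$ is independent of the lift $\tilde v$; this forces all $k$ terms $A_{v\times x}(\tilde\Gamma)$ to coincide, so the sum on the left is $k\,A_{\tilde v}(\tilde\Gamma)$, and dividing by $k\,A_v(\Gamma)$ gives $\frac{A_{\tilde v}(\tilde\Gamma)}{A_v(\Gamma)}=\frac 1k\det\L(\Gamma)$. (When $A_v(\Gamma)=0$ one reads the conclusion as the polynomial identity $k\,A_{\tilde v}(\tilde\Gamma)=A_v(\Gamma)\det\L(\Gamma)$, which is what the argument proves directly.)

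I expect the main difficulty to be organizational rather than conceptual: one must check carefully that a single change of basis in the $\C^k$-factor simultaneously block-diagonalizes every block of $\tilde L$ and of the perturbation $e_v e_v^{\top}\otimes I_k$ — this is exactly the step where the permutation-voltage construction of the cover is used, and where one must identify the $W$-part with the stated voltage Laplacian — and one must be careful to extract the \emph{linear} term in $t$, not the top-degree term; the latter would produce a sum over rooted spanning forests of $\tilde\Gamma$ rather than the sum of single-rooted arborescences that we actually need.
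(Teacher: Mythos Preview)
The paper does \emph{not} prove this statement: Theorem~\ref{bigboi} is quoted from Chepuri et~al.\ (the reference \cite{Sylvester}) as background, and the paper only uses it as a black box in Section~\ref{section4}. So there is no in-paper proof to compare your attempt against.

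That said, your argument is correct and self-contained. A couple of small remarks. First, your description of $\mathcal L(\Gamma)$ as the $W$-block under $\C^k=\C\mathbf 1\oplus W$ is an abstract one, whereas the paper's Definition~\ref{lowerright} writes down a specific $n(k-1)\times n(k-1)$ matrix; you should note that these agree. Concretely, the change of basis $\{e_1,e_2,\dots,e_k\}\mapsto\{e_1,\,e_2-e_1,\dots,e_k-e_1\}$ in each fiber (equivalently: add rows $2,\dots,k$ to row $1$, then subtract column $1$ from columns $2,\dots,k$) conjugates $\tilde L$ to a block-lower-triangular matrix whose diagonal blocks are $L$ and precisely the paper's $\mathcal L(\Gamma)$, and it sends $e_v e_v^{\top}\otimes I_k$ to itself. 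So your determinant computation with $M(t)$ goes through verbatim with the paper's $\mathcal L(\Gamma)$, not just with an abstract $\rho$-block. Second, your final step invokes the Galashin--Pylyavskyy independence of $A_{\tilde v}(\tilde\Gamma)/A_v(\Gamma)$ from the choice of lift; since that result is stated in the paper prior to and independently of Theorem~\ref{bigboi}, this is not circular, but it is worth flagging that you are using it. With those two clarifications the write-up would be complete.
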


\begin{eg}
Returning to figures \ref{fig:perm-voltage-graph} and \ref{fig:perm-derived-graph} as an example. The voltage Laplacian can be written as a linear transformation on the basis vectors $v_1^2,v_1^3,v_2^2,v_2^3,v_3^2,v_3^3$, in this order, with matrix
 $$  \mathcal{L} = \begin{bmatrix}
        b & 0 & 0 & -b & 0 & 0 \\
        a & 2a+b & b & b & 0 & 0\\
        0 & 0 & c & 0 & -c & 0\\
        0 & 0 & 0 & c & 0 & -c \\
        -d & 0 & 0 & -e & d+e & 0\\
        0 & -d & -e & 0 & 0 & d+e\\
    \end{bmatrix}.$$
\end{eg}

We now restate our main theorem.
\begin{mainthm*}
Let $\Gamma = (V,E,\wt)$ be a weighted digraph, and $k$ be a positive integer. We choose a permutation voltage $\sigma_e$ in the symmetric group $ S_k$ for each edge $e\in E$ independently and uniformly at random. The random voltages allow us to construct our random $k$-fold covering graph $\tilde{\Gamma}$. Then 

        $$ \E\left[\frac{A_{\tilde v}(\tilde \Gamma)}{A_v(\Gamma)}\right] = \frac 1k \prod_{w\in V} \left( \sum_{\alpha \in E_s(w)} \wt(\alpha)\right)^{k-1}$$

        where $E_s(w)$ is the set of edges in $\Gamma$ with source $w$. 
\end{mainthm*}

\begin{eg}

Consider the following graph of 2 vertices:
\[
\Gamma = 
\begin{tikzpicture}[baseline=-0.65ex,scale=1.2]
    \node[fill=black, circle, inner sep=1pt, label=below:1] (1) at (0,0) {};
    \node[fill=black, circle, inner sep=1pt, label=below:2] (2) at (2,0) {};
    
    \draw[->, thick, green!70!black] (0,0.4) arc[start angle=90, end angle=450, radius=0.2];
    \node at (-0.3,0.5) {\small $a$};

    \draw[->, thick, red] (2,0) -- (0,0);
    \node at (1,0.2) {\small $b$};
\end{tikzpicture}
\]

There are four 2-fold covering graphs of $\Gamma$:

\begin{center}
\begin{tikzpicture}[scale=1.1, every node/.style={scale=1}]
\begin{scope}
\node[fill=black, circle, inner sep=1pt, label=left:$1^2$] (1a) at (0,2) {};
\node[fill=black, circle, inner sep=1pt, label=left:$1^1$] (1b) at (0,0) {};
\node[fill=black, circle, inner sep=1pt, label=right:$2^2$] (2a) at (1.5,2) {};
\node[fill=black, circle, inner sep=1pt, label=right:$2^1$] (2b) at (1.5,0) {};

\draw[->, thick, red] (2a) -- (1a);
\draw[->, thick, red] (2b) -- (1b);
\draw[->, thick, green!70!black, mid arrow] (1a) to[bend left=40] (1b);
\draw[->, thick, green!70!black, mid arrow]  (1b) to[bend left=40] (1a);

\node at (0.75,-0.8) {$\mathcal{A}_{1^1}(\widetilde{\Gamma}) = ab^2$};
\end{scope}
\begin{scope}[xshift=3.5cm]
\node[fill=black, circle, inner sep=1pt, label=left:$1^2$] (1a) at (0,2) {};
\node[fill=black, circle, inner sep=1pt, label=left:$1^1$] (1b) at (0,0) {};
\node[fill=black, circle, inner sep=1pt, label=right:$2^2$] (2a) at (1.5,2) {};
\node[fill=black, circle, inner sep=1pt, label=right:$2^1$] (2b) at (1.5,0) {};

\draw[->, thick, red] (2a) -- (1b);
\draw[->, thick, red] (2b) -- (1a);
\draw[->, thick, green!70!black, mid arrow] (1a) to[bend left=30] (1b);
\draw[->, thick, green!70!black, mid arrow]  (1b) to[bend left=30] (1a);

\node at (0.75,-0.8) {$\mathcal{A}_{1^1}(\widetilde{\Gamma}) = ab^2$};
\end{scope}

\begin{scope}[xshift=7cm]
\node[fill=black, circle, inner sep=1pt, label=left:$1^2$] (1a) at (0,2) {};
\node[fill=black, circle, inner sep=1pt, label=left:$1^1$] (1b) at (0,0) {};
\node[fill=black, circle, inner sep=1pt, label=right:$2^2$] (2a) at (1.5,2) {};
\node[fill=black, circle, inner sep=1pt, label=right:$2^1$] (2b) at (1.5,0) {};

\draw[->, thick, red] (2a) -- (1a);
\draw[->, thick, red] (2b) -- (1b);
\draw[->, thick, green!70!black] (1a) to[out=45,in=135,looseness=8] (1a);
\draw[->, thick, green!70!black] (1b) to[out=225,in=315,looseness=8] (1b); 

\node at (0.75,-0.8) {$\mathcal{A}_{1^1}(\widetilde{\Gamma}) = 0$};
\end{scope}

\begin{scope}[xshift=10.5cm]
\node[fill=black, circle, inner sep=1pt, label=left:$1^2$] (1a) at (0,2) {};
\node[fill=black, circle, inner sep=1pt, label=left:$1^1$] (1b) at (0,0) {};
\node[fill=black, circle, inner sep=1pt, label=right:$2^2$] (2a) at (1.5,2) {};
\node[fill=black, circle, inner sep=1pt, label=right:$2^1$] (2b) at (1.5,0) {};

\draw[->, thick, red] (2a) -- (1b);
\draw[->, thick, red] (2b) -- (1a);
\draw[->, thick, green!70!black] (1a) to[out=225,in=315,looseness=8] (1a);
\draw[thick, green!70!black] (1b) to[out=225,in=315,looseness=8] (1b);

\node at (0.75,-0.8) {$\mathcal{A}_{1^1}(\widetilde{\Gamma}) = 0$};
\end{scope}
\end{tikzpicture}
\end{center}
Therefore, the expected value of $A_{1^1}(\tilde{\Gamma})$ is $\frac 14 (ab^2+ab^2+0+0) = \frac 12 ab^2$. Since $A_1(\Gamma)$ is deterministically equal to $b$, we can see that $\E[\frac{A_{1^1}(\tilde{\Gamma})}{A_1(\Gamma)}] = \frac 12 ab$. Since $\sum_{\alpha \in E_s(1)} \wt(\alpha)=a$ and $\sum_{\alpha \in E_s(2)} \wt(\alpha)=b$, this is consistent with theorem \ref{main thm}.
\end{eg}

\section{Proof of Theorem 1.1} \label{section4}









Recall that Theorem \ref{main thm} gives a formula for the expected value of the arborescences ratio $A_{\tilde v}(\tilde \Gamma)\over A_v(\Gamma)$ as follows
 $$ \E[R(\tilde{\Gamma})]=\E\left[\frac{A_{\tilde v}(\tilde \Gamma)}{A_v(\Gamma)}\right] = \frac 1k \prod_{w\in V} \left( \sum_{\alpha \in E_s(w)} \wt(\alpha)\right)^{k-1}$$

        where $E_s(w)$ is the set of edges in $\Gamma$ with source $w$. 

Say the vertices of $\Gamma$ are $v_1,\cdots,v_n$ and edges $\{e = (v_i \to v_j, \sigma_e)\},$ where $\sigma_e\in S_k$ is a random permutation.

Recall $$ R(\tilde{\Gamma}):= \frac{A_{\tilde v}(\tilde \Gamma)}{A_v(\Gamma)}= \frac 1k \det \mathcal{L} = \frac 1k \det (D-\mathcal{A})$$ 

where $D,\mathcal{A}$ are square matrices with rows/columns labeled $v_1^2,\cdots,v_1^{k}, v_2^2,\cdots,v_2^{k}, $
\\ $\cdots, v_n^2, \cdots, v_n^{k}$, in this order.

Here $D$ is a deterministic diagonal matrix satisfying $$D[v_i^t, v_i^t] = \sum_{e = (v_i\to v_j)} \wt(e)$$

and $\mathcal{A}$ is a random matrix with

$$ \mathcal{A}[v_i^t, v_j^r] = \sum_{e = (v_i\to v_j), \sigma_e(t) = r} \wt(e) - \sum_{e = (v_i\to v_j), \sigma_e(t) = 1} \wt(e) $$

where the $\sigma_e \in S_k$ are chosen independently and uniformly at random.

Thus, Theorem \ref{main thm} is equivalent to showing that our random matrix $\mathcal{A}$ satisfies $$\E [\det(D-\mathcal{A})] = \det(D).$$

Define $S := \{v_1^2,\cdots,v_1^{k}, v_2^2,\cdots,v_2^{k}, \cdots, v_n^2, \cdots, v_n^{k}\} $.

For each edge $e$ of $\Gamma$, let $X_{e,a,b} = \1_{\{ \sigma_e(a)=b\}}$, then we can write 

$$\mathcal{A}[v_i^t, v_j^r] = \sum_{e = (v_i\to v_j)} \wt(e) (X_{e,t,r} - X_{e,t,1}) $$

where we group together $X_{e,t,r},X_{e,t,1}$; we naturally define $Y_{e,t,r} := X_{e,t,r} - X_{e,t,1}$.

\begin{eg}\label{eg:section4}
    Let $\Gamma$ be a digraph on one vertex, with two loops weighted $a,b$. For $k=3$, the matrix $\mathcal{L}=D-\mathcal{A}$ is as follows:

$$\begin{bmatrix}
    a+b - a(X_{a,2,2}-X_{a,2,1}) - b(X_{b,2,2}-X_{b,2,1})   & - a(X_{a,2,3}-X_{a,2,1}) - b(X_{b,2,3}-X_{b,2,1})   \\
    - a(X_{a,3,2}-X_{a,3,1}) - b(X_{b,3,2}-X_{b,3,1})   & a+b - a(X_{a,3,3}-X_{a,3,1}) - b(X_{b,3,3}-X_{b,3,1}) 
\end{bmatrix}$$

$$=\begin{bmatrix}
    a+b - aY_{a,2,2}- bY_{b,2,2}   & - aY_{a,2,3} - bY_{b,2,3}   \\
    - aY_{a,3,2} - bY_{b,3,2}  & a+b - aY_{a,3,3} - bY_{b,3,3}
\end{bmatrix}.$$

Similarly, the matrix $\L(\tilde\Gamma)$ of the digraph in example \ref{ex:perm} is of the form:

 $$   \L(\tilde\Gamma) =\begin{bmatrix}
         a+b - aY_{a,2,2} & - aY_{a,2,3} & -bY_{b,2,2} & -bY_{b,2,3} & 0 & 0 \\
        - aY_{a,3,2} & a+b-aY_{a,3,3} & -bY_{b,3,2} & -bY_{b,3,3} & 0 & 0\\
        0 & 0 & c & 0 & -cY_{c,2,2} & -cY_{c,2,3}\\
        0 & 0 & 0 & c & -cY_{c,3,2} & -cY_{c,3,3} \\
        -dY_{d,2,2} & -dY_{d,2,3} & -eY_{e,2,2} & -eY_{e,2,3} & d+e & 0\\
        -dY_{d,3,2} & -dY_{d,3,3} & -eY_{e,3,2} & -eY_{e,3,3}& 0 & d+e\\
    \end{bmatrix}.$$

\end{eg}

  To compute $\det(\L)$, we first choose a permutation $\rho$ on $S= \{v_i^t \mid 1\le i\le n, 2\le t\le k\}$ to expand; the product corresponding to $\rho$ is $$sgn(\rho)\prod\limits_{\substack{1\le i\le n \\ 2\le t\le k}} \mathcal{L}(\tilde{\Gamma})_{v_i^t, \rho(v_i^t)} .$$ 
  
   We define a \emph{term} to be either $\wt(e)$ that can be found on the diagonal of $\mathcal{L}(\tilde{\Gamma})$ or a $-\wt(e)Y_{e,*,*} $. We expand each $\mathcal{L}(\tilde{\Gamma})_{v_i^t, \rho(v_i^t)}$ into terms; for example, in example \ref{ex:perm}, $\mathcal{L}(\tilde{\Gamma})_{1,1} $ has three terms: $a,b$, and $-aY_{a,2,2}$.

We define a \emph{monomial} to be a product of terms that arise when we expand $\prod\limits_{\substack{1\le i\le n \\ 2\le t\le k}} \mathcal{L}(\tilde{\Gamma})_{v_i^t, \rho(v_i^t)}$. For example, in example \ref{ex:perm}, we can have $\rho = 135624 = v_1^2v_2^2v_3^2v_3^3v_1^3v_2^3$ , and we choose terms $a,-bY_{b,3,2}, -cY_{c,2,2}, -cY_{c,3,3}, -dY_{d,2,3}, -eY_{e,3,3}$ from $\mathcal{L}_{1,1}, \mathcal{L}_{2,3}, \mathcal{L}_{3,5}, \mathcal{L}_{4,6}, \mathcal{L}_{5,2}, \mathcal{L}_{6,4}$, respectively. The corresponding monomial is, then, $-abc^2de Y_{b,3,2}Y_{c,2,2}Y_{c,3,3}Y_{d,2,3}Y_{e,3,3}$.

For a monomial, we define $F$ to be the set of $wt(e)$'s from the diagonal in the monomial. For each edge $e$, let $n_e$ be the number of terms of the form $-\wt(e) Y_{e,*,*}$ in the monomial, namely $-\wt(e) Y_{e,i_{e,1} , j_{e,1}},-\wt(e) Y_{e,i_{e,2},j_{e,2}}, \cdots, -\wt(e)Y_{e,i_{e,n_e},j_{e,n_e}} $ where $i_{e,\alpha} \in \{2,\cdots,k\}, j_{e,\alpha} \in \{1,\cdots,k\}$.  Thus, the monomial can be written as $$\left(\prod_{e' \in F} \wt(e')\right) \cdot  \prod_{e\in E}  \left(\prod_{l=1}^{n_e}(-\wt(e))Y_{e,i_{e,l},j_{e,l}} \right)  .$$ 

Note that in the formula, both $i_{e,1},\cdots,i_{e,n_e}$ and $j_{e,1},\cdots,j_{e,n_e}$ are pairwise distinct, and take values in $\{2,\cdots,n\}$. 

Observe $\det(\mathcal{L})$ is just a signed sum all of these monomials, where the sign comes from the sign of the permutation $\rho$.

Fix a sequence of nonnegative integers $(n_e)_{e\in E}$. We consider monomials where we choose exactly $n_e$ terms of the form $-\wt(e)Y_{e,*,*}$ for every $e\in E$. Every such monomial can be written as 

$$ \left(\prod_{e' \in F} \wt(e')\right) \cdot  \prod_{e\in E} (-\wt(e))^{n_e} \cdot \left(\prod_{l=1}^{n_e} Y_{e,i_{e,l},j_{e,l}} \right). $$

This means when we sum all these monomials, every term shares the same $\left(\prod_{e' \in F} \wt(e')\right) \cdot  \prod_{e\in E} (-\wt(e))^{n_e} $ factor, so we only need to look at the expected value of the (signed) sum of the $\prod_{e\in E} \prod_{l=1}^{n_e} Y_{e,i_{e,l},j_{e,l}} $'s.

If $n_e = 0$ for all $e\in E$, then we only choose terms of the form $\wt(e)$ in the monomial; we did not choose any term of the form $-\wt(e)Y_{e,*,*}$. In this case, we have chosen terms arising only from $D$, and we must have $\rho=id\in S_k$. Thus the sum of all these terms is $\det(D)$. We did not need to take an expectation, since these terms are deterministic.

It remains to show if there exists $e\in E$ such that $n_e>0$, then the corresponding sum is $0$.

\begin{prp}\label{prp:expect}
    Let $e=(x\to y)\in E$ be a fixed edge where $x,y\in [n]$, $t \in \{1,\cdots,k-1\}$ be a fixed integer. Fix distinct $i_{1},\cdots,i_{t} \in \{2,\cdots,k\}$, distinct $j_1,\cdots,j_{t} \in \{2,\cdots,k\}$ and a bijection $\pi \colon \{i_1,\cdots,i_t\} \to \{j_1,\cdots,j_t\}$. Then $$ \E\left[\prod_{l=1}^t Y_{e,i_l,j_l}\right]$$ depends only on $t$; furthermore when $t=1$ the this is equal to 0.
\end{prp}

\begin{proof}
    Note that we only need to choose $\sigma_e \in S_k$ uniformly at random here. Observe if $\tau \colon \{i_1,\cdots,i_t\} \to [k]$ is a map, then $$ \E\left[\prod_{l=1}^t X_{e,i_l, \tau(i_l)}\right] = \begin{cases}
    0 & \text{ if } \tau \text{ is not injective; } \\
    \frac{(k-t)!}{k! } & \text{ if } \tau \text{ is injective }
\end{cases}$$

Hence 
\begin{align*}
    & \E\left[\prod_{l=1}^t Y_{e,i_l, \pi(i_l)}\right] \\
    &= \E\left[\prod_{l=1}^t (X_{e,i_l, \pi(i_l)} - X_{e,i_l,1}) \right]\\ &= \E\left[ \prod_{l=1}^t X_{e,i_l, \pi(i_l)}\right] - \sum_{z=1}^t \E\left[\prod_{\substack{1\le l\le t \\ l\ne z}} X_{e,i_l,\pi(i_l)} \cdot X_{e,i_z,1}\right]  \\
    &= (1-t)\frac{(k-t)!}{k!}
\end{align*}

\end{proof}

For this choice of $(n_e)_{e\in E}$, choose an edge $e_0$ such that $n_{e_0}>0$.

Case 1: The monomials have exactly one term of the form $Y_{e_0,*,*} $ , i.e. $n_{e_0}=1$. In this case, since the $\sigma_e$'s are independent random permutations, the expected value of every monomial factor through a term $\E[Y_{e_0,*,*} ]$, which is zero, so the contribution of this monomial to the expected value is 0.

Case 2: The monomials have $\ge 2$ terms of the form $Y_{e_0,*,*} $, i.e. $n_{e_0}\ge2$. 

Fix $i_1<\cdots<i_{n_{e_0}}\in \{2,\cdots,n\}$. Among the monomials associated to our fixed $(n_e)_{e\in E}$, we focus on the monomials that contain a product $\prod_{l=1}^{n_{e_0}} Y_{e_0,i_{e_0,l},j_{e_0,l}}$ for some distinct integers $j_{e_0,1},\cdots,j_{e_0,n_{e_0}} \in \{2,\cdots,n\}$. We will show the expected value of the signed sum of these monomials is zero. 

Again, by the independence of $\sigma_e$'s, we have $$ \E\left[ \prod_{e\in E} \prod_{l=1}^{n_e} Y_{e,i_{e,l},j_{e,l}}\right] = \prod_{e\in E} \E\left[ \prod_{l=1}^{n_e} Y_{e,i_{e,l},j_{e,l}}\right] = \E\left[ \prod_{l=1}^{n_{e_0}} Y_{e_0,i_{e_0,l},j_{e_0,l}}\right] \cdot \prod_{e\in E\setminus \{e_0\}}\E\left[ \prod_{l=1}^{n_e} Y_{e,i_{e,l},j_{e,l}}\right]$$

By Proposition \ref{prp:expect}, 

$$\E\left[Y_{e_0,i_{e_0,1},j_{e_0,1}}Y_{e_0,i_{e_0,2},j_{e_0,2}}\prod_{l=3}^{n_{e_0}}Y_{e_0,i_{e_0,l},j_{e_0,l}}\right] = \E\left[ Y_{e_0,i_{e_0,1},j_{e_0,2}}Y_{e_0,i_{e_0,2},j_{e_0,1}}\prod_{l=3}^{n_{e_0}}Y_{e_0,i_{e_0,l},j_{e_0,l}} \right] $$ 

Thus, \begin{align*} 
 &\E\left[ Y_{e_0,i_{e_0,1},j_{e_0,1}} Y_{e_0,i_{e_0,2},j_{e_0,2}} \prod_{l=3}^{n_{e_0}} Y_{e_0,i_{e_0,l},j_{e_0,l}}\right] \cdot \prod_{e\in E\setminus \{e_0\}}\E\left[ \prod_{l=1}^{n_e} Y_{e,i_{e,l},j_{e,l}}\right] \\= &\E\left[ Y_{e_0,i_{e_0,1},j_{e_0,2}} Y_{e_0,i_{e_0,2},j_{e_0,1}} \prod_{l=3}^{n_{e_0}} Y_{e_0,i_{e_0,l},j_{e_0,l}}\right] \cdot \prod_{e\in E\setminus \{e_0\}}\E\left[ \prod_{l=1}^{n_e} Y_{e,i_{e,l},j_{e,l}}\right]
\end{align*}
  Denote $e=(x\to y)$ where $x,y\in [n]$. The term $ \prod_{e\in E} \prod_{l=1}^{n_e} Y_{e,i_{e,l},j_{e,l}}$ appears in the signed sum from some permutation $\rho \in S_{n(k-1)}$. Furthermore, such $\rho$ must satisfy $\rho(v_x^{i_{e,l}})=v_y^{j_{e,l}}$ for all $e\in E, l=1,\cdots,n_{e}$, but $\rho$ is not necessarily unique. In the computation of the signed sum, we need to account this term for every such $\rho$.
  

  Write $e_0 = (x_0\to y_0)$. Consider any particular choice of $\rho$. Observe that the term $$Y_{e_0,i_{e_0,1},j_{e_0,2}} Y_{e_0,i_{e_0,2},j_{e_0,1}} \prod_{l=3}^{n_{e_0}} Y_{e_0,i_{e_0,l},j_{e_0,l}} \allowbreak\prod_{e\in E\setminus \{e_0\}} \prod_{l=1}^{n_e} Y_{e,i_{e_0,l},j_{e_0,l}} $$ is also in the sum; it comes from the permutation $(v_{y_0}^{j_{e_0,1}},v_{y_0}^{j_{e_0,2}})\circ\rho$. We define a map $f$ that sends the pair $$\left(\rho,Y_{e_0,i_{e_0,1},j_{e_0,1}} Y_{e_0,i_{e_0,2},j_{e_0,2}} \prod_{l=3}^{n_{e_0}} Y_{e_0,i_{e_0,l},j_{e_0,l}} \prod_{e\in E\setminus \{e_0\}} \prod_{l=1}^{n_e} Y_{e,i_{e,l},j_{e,l}} \right) $$ to the pair $$ \left((v_{y_0}^{j_{e_0,1}},v_{y_0}^{j_{e_0,2}})\circ\rho,Y_{e_0,i_{e_0,1},j_{e_0,2}} Y_{e_0,i_{e_0,2},j_{e_0,1}} \prod_{l=3}^{n_{e_0}} Y_{e_0,i_{e_0,l},j_{e_0,l}} \prod_{e\in E\setminus \{e_0\}} \prod_{l=1}^{n_e} Y_{e,i_{e,l},j_{e,l}} \right).$$ Since $f$ is an involution, it defines a pairing (of pairs). And since the permutations in each pair have opposite signs, the signed sum of expected value of each pair is $0$. The conclusion follows.

\begin{eg}

In example \ref{eg:section4}, we arrived at 
    $$\mathcal{L} = \begin{bmatrix}
    a+b - aY_{a,2,2} - bY_{b,2,2}   & - aY_{a,2,3} - bY_{b,2,3}   \\
    - aY_{a,3,2} - bY_{b,3,2}   & a+b - aY_{a,3,3} - bY_{b,3,3} 
\end{bmatrix}$$

The monomial $(-aY_{a,*,*})(-bY_{b,*,*})$ is associated to the sequence $n_a=n_b=1$. The monomial $(-aY_{a,*,*})(-aY_{a,*,*})$ is associated to the sequence $n_a=2, n_b=0$. The monomial $ (-aY_{a,*,*})b$ is associated to the sequence $n_a=1, n_b=0$. 

The sequence  $n_a=n_b=1$ is associated with four monomials: $$(-aY_{a,2,2})(-bY_{b,3,3}),  (-aY_{a,2,3})(-bY_{b,3,2}), (-aY_{a,3,2})(-bY_{b,2,3}), (-aY_{a,3,3})(-bY_{b,2,2})$$ The contribution of each monomial to the expected value is $0$ since $\E[Y_{a,i,j}]=0$.

The sequence  $n_a=2,n_b=0$ is associated with monomials $(-aY_{a,2,2})(-aY_{a,3,3})$, $(-aY_{a,2,3})(-aY_{a,3,2})$. We pair the permutation $(id, Y_{a,2,2}Y_{a,3,3})$ with $((2,3),Y_{a,2,3}Y_{a,3,2})$. The signed sum of these is $0$. This corresponds to expanding 

$$ \E[ (-aY_{a,2,2})(-aY_{a,3,3}) - (-aY_{a,2,3})(-aY_{a,3,2})] = 0$$

as part of the computation of the expected value of the determinant of the matrix.
\end{eg}

\section*{Acknowledgements}
We are grateful to Sylvester W. Zhang for introducing the problem and providing valuable guidance on the research. We thank Tao Gui for helpful conversations. We thank PACE 2024 for providing an unforgettable opportunity for us to conduct mathematical researches. This program is partially supported by NSFC grant 12426507.

\printbibliography

\end{document}